\documentclass[11pt]{amsart}
\usepackage{geometry}                % See geometry.pdf to learn the layout options. There are lots.
\geometry{letterpaper}    
\usepackage{graphicx}
\usepackage{amsfonts,amsthm,amsmath,amssymb,latexsym, amscd, euscript}
\usepackage{graphicx,color}
\usepackage[all]{xy}
\usepackage{epsfig}
\usepackage{labelfig}
\usepackage{mathrsfs}

\usepackage{amssymb}
\usepackage{epstopdf}
\DeclareGraphicsRule{.tif}{png}{.png}{`convert #1 `dirname #1`/`basename #1 .tif`.png}

\begin{document}

\author{Dragomir \v Sari\' c}

\address{Mathematics Ph.D. Program,
CUNY Graduate Center, 365 Fifth Avenue, Room 4208, New York, NY
10016-4309 and, \vskip .1 cm Department of Mathematics, Queens
College, The City University of New York, 237 Kiely Hall, 65-30
Kissena Blvd., Flushing, NY 11367}

\email{Dragomir.Saric@qc.cuny.edu}

\theoremstyle{definition} %%% for statements in roman typeface

 \newtheorem{definition}{Definition}[section]
 \newtheorem{remark}[definition]{Remark}
 \newtheorem{example}[definition]{Example}

\newtheorem*{notation}{Notation}  %%% for statements without numbering

\theoremstyle{plain}      %%% for statements in italic typeface

 \newtheorem{proposition}[definition]{Proposition}
 \newtheorem{theorem}[definition]{Theorem}
 \newtheorem{corollary}[definition]{Corollary}
 \newtheorem{lemma}[definition]{Lemma}

\def\H{{\mathbb H}}
\def\F{{\mathcal F}}
\def\R{{\mathbb R}}
\def\Q{{\mathbb Q}}
\def\Z{{\mathbb Z}}
\def\E{{\mathcal E}}
\def\N{{\mathbb N}}
\def\X{{\mathcal X}}
\def\Y{{\mathcal Y}}
\def\C{{\mathbf C}}
\def\D{{\mathbf D}}
\def\G{{\mathcal G}}
\def\T{{\mathcal T}}

\title{Shears for quasisymmetric maps}

\subjclass{}

\keywords{}
\date{\today}

\maketitle

\begin{abstract}
We give an elementary proof of a theorem that characterizes quasisymmetric maps of the unit circle in terms of shear coordinates on the Farey tesselation. The proof only uses the normal family argument for quasisymmetric maps and some elementary hyperbolic geometry. 
\end{abstract}

\section{Introduction}

The {\it universal Teichm\"uller space} $\mathcal{T}$ is the space of all quasisymmetric maps of the unit circle $S^1$ that fix $1$, $i$ and $-1$. The Teichm\"uller space of any Riemann surface embeds as a closed complex submanifold in the universal Teichm\"uller space $\mathcal{T}$ (see \cite{Bers}, \cite{GL}). In addition, $\mathcal{T}$ contains various subspaces defined by the smoothness properties of the circle maps that are of interest in analysis and mathematical physics (see \cite{NagSullivan}, \cite{TakhTeo}, \cite{Bishop}, \cite{Shen} \cite{Astala}). 

The unit disk $\mathbb{D}=\{z:|z|<1\}$ is equipped with the hyperbolic metric $\rho (z)|dz|=\frac{2|dz|}{1-|z|^2}$ and all geometric considerations are with respect to this metric. The unit circle $S^1$ is the visual boundary of $\mathbb{D}$. We parametrize the universal Teichm\"uller space $\mathcal{T}$ in terms of the hyperbolic invariants on $\mathbb{D}$.

 Let $\Delta_0\subset\mathbb{D}$ be an ideal geodesic triangle with vertices $1$, $i$ and $-1$ on $S^1$. The Farey tesselation $\mathcal{F}$ of $\mathbb{D}$ is a triangulation by ideal geodesic triangles which is obtained by repeatedly taking the hyperbolic reflections of $\Delta_0$ in its sides (for example, see \cite{Penner}, \cite{Bonahon}). We implicitly identify $\mathcal{F}$ with the set of its edges.

A geodesic in $\mathbb{D}$ is identified with the pair of its ideal endpoints on the unit circle $S^1$. 
Using this identification, an orientation preserving homeomorphism $h:S^1\to S^1$ that fixes $1$, $i$ and $-1$ maps the edges of the Farey tesselation $\mathcal{F}$ onto edges of another ideal geodesic triangulation $h(\mathcal{F})$ of the unit disk $\mathbb{D}$. The triangulation $h(\mathcal{F})$ has $\Delta_0$ as a triangle since $h$ fixes $1$, $i$ and $-1$. For any two adjacent triangles $\Delta_1$ and $\Delta_2$ of $\mathcal{F}$ with common boundary side $f\in\mathcal{F}$, the hyperbolic inversion in $f$ interchanges them. However, this is not the case for the adjacent triangles in $h(\mathcal{F})$. If $h(\Delta_1)$ is given then the position of $h(\Delta_2)$ is determined by a single real number. Indeed, let $\Delta_2'$ be the image of $h(\Delta_1)$ under the hyperbolic inversion in the boundary side $h(f)$. Then $h(\Delta_2)$ is the image of $\Delta_2'$ under a unique hyperbolic translation with the translation axis $h(f)$ oriented as a boundary side of $h(\Delta_1)$. The translation length is a real number called the {\it shear} of $h(\Delta_1)$ and $h(\Delta_2)$. Thus a homeomorphism $h:S^1\to S^1$ induces a shear function $s_h:\mathcal{F}\to\mathbb{R}$ which assign to each edge $f\in \mathcal{F}$ the shear of the image of the two adjacent triangles (see \cite{Penner}).

Let $s:\mathcal{F}\to\mathbb{R}$ be an arbitrary ``shear'' function. 
Then there exists a developing map $h_s$ from the vertices of $\mathcal{F}$ into $S^1$ which realizes $s$. The vertices of $\mathcal{F}$ are dense in $S^1$ and yet $h_s$ does not always extend to a homeomorphism (see \cite{Penner}). 
A question of Penner \cite{Penner} is to characterize which shear functions give rise to homeomorphisms, quasisymmetric maps, symmetric maps and other smoothness classes. In \cite{Saric1}, a characterization for homeomorphisms and symmetric maps was established, and a condition for quasisymmetric maps is given. We give an elementary proof for the characterization of quasisymmetric maps in terms of shear functions.

%that preserves cyclic order is necessarily injective and it can be extended to map geodesics onto geodesics via the above identification. The edges $\{ f_n\}_{n\in\mathbb{N}}$ of the Farey tesselation $\mathcal{F}$ are geodesics and they are mapped onto another set of pairwise disjoint geodesics $\{ h(f_n)\}_{n\in\mathbb{N}}$. Each complementary triangle of $\mathcal{F}$ corresponds to a complementary triangle of $\cup_{n\in\mathbb{N}}h(f_n)$. The union of the complementary triangles of $\cup_{n\in\mathbb{N}}h(f_n)$ covers $\mathbb{D}$ if and only if $h$ is a homeomorphism. 

Let $\Delta_1$ and $\Delta_2$ be two ideal geodesic triangles with disjoint interiors and a common boundary geodesic $f\in\mathcal{F}$. Let $z\in S^1$ be one ideal endpoint of $h(f)$ and let $C$ be an arbitrary horocycle based at $h(z)$. Denote by $\delta_i$ the length of the arc of $C$ that is inside the triangle $h(\Delta_i)$ for $i=1,2$. Note that the quotient $\delta_1/\delta_2$ is independent of the choice of a horocycle based at $h(z)$ because the restriction  of the hyperbolic metric on one horocycle scales to the restriction on the other horocycle. A {\it shear} on the common geodesic $f$ induced by the pair of ideal geodesic triangles $h(\Delta_1)$ and $h(\Delta_2)$ is given by
\begin{equation}
\label{eq:def_shear}
s(f)=\log \frac{\delta_1}{\delta_2},
\end{equation}
where $h(\Delta_2)$ comes before $h(\Delta_1)$ for the orientation of $C$. The above definition of the shear is independent of the choice of the endpoint of $f$ and the choice of a horocycle at the endpoint, and it is equivalent to the definition using the translation length (see \cite{Penner}).

Let $p\in S^1$ be an ideal endpoint of an edge of $\mathcal{F}$. A {\it fan of edges} $\mathcal{F}^p$ with the {\it tip} $p$ consists of all edges of $\mathcal{F}$ with one endpoint $p$. 
Fix a horocycle $C_p$ based at $p$ and index $\mathcal{F}^p$ by $\{ f_k^p\}_{k\in\mathbb{Z}}$ such that the point $f_k^p\cap C_p$ comes before $f_{k+1}^p\cap C_p$ for the natural orientation of $C_p$ as a boundary of the horoball and for all $k\in\mathbb{Z}$. Let $\delta_k^p$ be the length of the arc of  $C_p$ that is between the edges $f_k^p$ and $f_{k+1}^p$.

We give a new proof of the following theorem (see \cite{Saric1}, \S 5, and \cite{FanHu}).

\begin{theorem} 
\label{thm:main} A function $s:\mathcal{F}\to \R$ is
induced by shears of $h(\mathcal{F})$ of a quasisymmetric map of $h:S^1\to S^1$ if and only if there exists a constant
$M\geq 1$ such that for each fan of geodesics
$\mathcal{F}^p$ of $\F$ and for all $m\in\Z$ and $k\in\mathbb{N}\cup\{ 0\}$, we have
\begin{equation}
\label{eq:qs_horocycle}
\frac{1}{M}\leq\frac{\delta_m^p+\delta_{m+1}^p+\cdots +\delta_{m+k}^p}{\delta_{m-1}^p+\delta_{m-2}^p+\cdots +\delta_{m-k-1}^p}\leq M,
\end{equation}
where $\delta_n^p$ is the length of the arc of $C_p$ between $f_n^p$ and $f_{n+1}^p$.
\end{theorem}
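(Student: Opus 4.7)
The theorem is an ``if and only if.'' The forward direction (QS $\Rightarrow$ (\ref{eq:qs_horocycle})) is a direct hyperbolic-geometry computation, whereas the converse requires building a quasisymmetric homeomorphism out of an abstractly given shear function, and this is the substantial part where the normal-family argument enters.

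\emph{Necessity.} Suppose $h$ is quasisymmetric with induced shear function $s$, and fix a fan $\mathcal{F}^p$. I would conjugate by a M\"obius transformation so that in the upper half-plane model $h(p)=\infty$ and the other ideal endpoints $v_k$ of $f_k^p$ are placed at the integers. Then each $h(f_k^p)$ is the vertical geodesic at $x=h(v_k)$, and on the horocycle $\{y=1\}$ one reads $\delta_k^p = h(v_{k+1})-h(v_k)$. Consequently the ratio in (\ref{eq:qs_horocycle}) simplifies to
\[
\frac{h(v_{m+k+1})-h(v_m)}{h(v_m)-h(v_{m-k-1})},
\]
which is bounded in $[1/M,M]$ by the quasisymmetric estimate for $h$ applied to the two equal-length intervals $[v_{m-k-1},v_m]$ and $[v_m,v_{m+k+1}]$, with $M$ depending only on the quasisymmetric constant of $h$.

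\emph{Sufficiency.} Given $s:\mathcal{F}\to\R$ satisfying (\ref{eq:qs_horocycle}), construct the realizing quasisymmetric map via finite truncation and a normal family. For each integer $n\geq 1$, let $s_n$ equal $s$ on the finitely many edges of $\mathcal{F}$ within combinatorial distance $n$ of $\Delta_0$, and let $s_n=0$ elsewhere. Because $s_n$ is eventually zero, the developing map it defines on the vertices of $\mathcal{F}$ extends to an honest orientation-preserving self-homeomorphism $h_n$ of $S^1$ fixing $1,i,-1$ (outside the truncation the developing map agrees, component by component, with a M\"obius transformation). One then verifies that each $s_n$ itself satisfies (\ref{eq:qs_horocycle}) with a constant $M'$ depending only on $M$ and not on $n$: inside the truncation the ratios coincide with those of $s$; outside, every shear is zero so the ratios equal $1$ (the Farey baseline); and a fan straddling the truncation boundary has only finitely many horocyclic arcs perturbed away from the Farey values, which can be controlled by combining (\ref{eq:qs_horocycle}) for $s$ with an elementary tail estimate for the remaining zero-shear arcs. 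Applying the necessity direction to $h_n$ then yields a uniform quasisymmetric constant $K=K(M')$. The normal family theorem for $K$-quasisymmetric maps of $S^1$ fixing $1,i,-1$ furnishes a subsequential limit $h$; since $h_n$ agrees with the developing map $h_s$ on every fixed Farey vertex for all sufficiently large $n$, the limit $h$ coincides with $h_s$ on the dense set of vertices of $\mathcal{F}$, hence realizes the shear function $s$, and is quasisymmetric.

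\emph{Main obstacle.} The critical step is the uniform fan bound for the truncated shear functions $s_n$. Inside and outside the truncation the ratios in (\ref{eq:qs_horocycle}) behave as in already-understood cases, but a fan $\mathcal{F}^p$ whose tip lies near the truncation boundary mixes finitely many altered horocyclic arcs with infinitely many Farey-regular ones, and one must show such mixed configurations still satisfy (\ref{eq:qs_horocycle}) with a constant depending only on $M$. This is the only place where the full strength of the fan hypothesis, rather than the pointwise quasisymmetric inequality, is genuinely used; once this uniform bound is in hand, the remaining assembly is a routine application of the quasisymmetric normal family compactness.
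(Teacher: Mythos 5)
Your necessity argument matches the paper's. The sufficiency argument, however, has a fatal circularity at its central step. You truncate $s$ to finitely supported shear functions $s_n$, argue (plausibly, though the straddling-fan estimate is left as a sketch) that each $s_n$ satisfies (\ref{eq:qs_horocycle}) with a uniform constant $M'$, and then write that ``applying the necessity direction to $h_n$ yields a uniform quasisymmetric constant $K=K(M')$.'' The necessity direction goes the wrong way: it converts a quasisymmetry bound on a map into a bound on its fan ratios, not a fan-ratio bound into a quasisymmetry bound. What you actually need at this point --- that a shear function satisfying (\ref{eq:qs_horocycle}) with constant $M'$ induces a $K(M')$-quasisymmetric map --- is precisely the sufficiency direction you are trying to prove, merely restricted to finitely supported shears; and that restriction does not make the statement appreciably easier (each $h_n$ is piecewise M\"obius and hence quasisymmetric with \emph{some} constant, but nothing in your argument controls that constant independently of $n$). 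Once this uniform bound is granted, the normal-family extraction and the identification of the limit with $h_s$ on the dense vertex set are routine, so the entire difficulty of the theorem has been displaced into an unproved assertion.

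The paper fills exactly this hole with two nontrivial ingredients that your proposal lacks. First, Lemma \ref{lem:one_fan_qs} proves the quantitative sufficiency statement for shear functions supported on a \emph{single fan}, by conjugating the fan tip to $\infty$, realizing the developing map as an explicit increasing piecewise-linear function on $\mathbb{R}$, and estimating the Beurling--Ahlfors ratios $\frac{h_1(x+t)-h_1(x)}{h_1(x)-h_1(x-t)}$ directly from (\ref{eq:qs_horocycle}). Second, the general case is handled by contradiction: assuming $h_s$ is not quasisymmetric produces quadruples with degenerating image cross-ratios, one renormalizes by M\"obius maps $A_n$, and splits into two cases according to whether the pulled-back triangulations $\mathcal{F}_n$ converge or degenerate; the degenerate case is where the single-fan lemma supplies a normal family $g_n$ agreeing with the renormalized $h_s$ on $\frac{1}{n}$-nets (Lemma \ref{lem:limits}), forcing pointwise convergence to a homeomorphism and contradicting the cross-ratio blow-up. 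If you want to salvage your truncation scheme, you would need to prove the uniform quasisymmetry of the $h_n$ by some independent means, and any such proof will in effect reproduce the compactness-plus-single-fan analysis of the paper.
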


By (\ref{eq:def_shear}), the inequality (\ref{eq:qs_horocycle}) is equivalent to
\begin{equation}
\label{eq:qs_shears}
\frac{1}{M}\leq e^{s(f_m^p)}\frac{1+e^{s(f_{m+1}^p)}+\cdots
+e^{s(f_{m+1}^p)+s(f_{m+2}^p)+\cdots
+s(f_{m+k}^p)}}{1+e^{-s(f_{m-1}^p)}+\cdots
+e^{-s(f_{m-1}^p)-s(f_{m-2}^p)-\cdots -s(f_{m-k}^p)}}\leq M.
\end{equation}
The inequalities (\ref{eq:qs_horocycle}) and (\ref{eq:qs_shears}) imply that $\delta_n^p$ are  between two positive constants and that $s(f_n^p)$ are  between two real constants for all tips $p$ and integers $n$. In addition, Theorem \ref{thm:main} implies that if $s:\mathcal{F}\to\mathbb{R}$ satisfies (\ref{eq:qs_shears}) then $h:S^1\to S^1$ is a homeomorphism.

\vskip .2 cm

\noindent
{\it Acknowledgements.} We wish to thank an anonymus referee for useful suggestions regarding the exposition of the paper.

\section{Quasiconformal and quasisymmetric maps}

%Recall that an orientation preserving map $f$ of a region in the complex plane $\mathbb{C}$ is said to be $K$-{\it quasiconformal} if it is absolutely continuous on the lines and $\frac{|\partial_zf|+|\partial_{\bar{z}}f|}{|\partial_zf|-|\partial_{\bar{z}}f|}\leq K$ almost everywhere (see \cite{Ahlfors}). 

Given $a,b,c,d\in \bar{\mathbb{C}}=\mathbb{C}\cup\{\infty\}$ distinct, define the {\it cross-ratio}
$$
\mathrm{cr}(a,b,c,d)=\frac{(c-b)(d-a)}{(b-a)(d-c)}.
$$
Note that $\mathrm{cr}(1,i,-1,-i)=1$. In general,  $\mathrm{cr}(a,b,c,d)=1$ if and only if the hyperbolic geodesic in $\mathbb{D}$ with endpoints $a,c\in S^1$ is orthogonal to the hyperbolic geodesic with endpoints $b,d\in S^1$. The same statement is true when $a,b,c,d\in\bar{\mathbb{R}}=\mathbb{R}\cup\{\infty\}$ for the corresponding geodesics in the upper half-plane $\mathbb{H}:=\{ z=x+iy\in\mathbb{C}:y>0\}$ equipped with the hyperbolic metric $\rho (z)|dz|=\frac{|dz|}{y}$.   

An orientation preserving homeomorphism $h:S^1\to S^1$ is $M$-{\it quasisymmetric} if there exists $M\geq 1$ such that
\begin{equation}
\label{eq:qs-circle}
\frac{1}{M}\leq\mathrm{cr}(h(a),h(b),h(c),h(d))\leq M.
\end{equation}
for $a,b,c,d\in S^1$ given in a counterclockwise order such that $\mathrm{cr}(a,b,c,d)=1$ (see Tukia-Vaisala \cite{TV}). 
The smallest constant $M$ such that the above holds is called the {\it constant of quasisymmetry}.
In a completely analogous fashion one can define an $M$-quasisymmetric map of $\bar{\mathbb{R}}=\mathbb{R}\cup\{\infty\}$. If $\gamma :\mathbb{H}\to \mathbb{D}$ is a M\"obius map then an orientation preserving homeomorphism $h:S^1\to S^1$ is $M$-quasisymmetric if and only if $\gamma^{-1}\circ h\circ \gamma :\bar{\mathbb{R}}\to\bar{\mathbb{R}}$ is $M$-quasisymmetric.  Also, if $h:S^1\to S^1$ is $M$-quasisymmetric and $\gamma$ is a M\"obius map preserving $\mathbb{D}$ then $\gamma\circ h$ is also quasisymmetric.  All this follows by the invariance of the cross-ratio under M\"obius maps.

A map $h:S^1\to S^1$ is quasisymmetric if and only if it extends to a quasiconformal map of the unit disk $\mathbb{D}=\{ z:|z|<1\}$ (see \cite{Ahlfors}, \cite{BeurlingAhlfors} and \cite{DouadyEarle}). The analogous statement holds for $h:\bar{\mathbb{R}}\to\bar{\mathbb{R}}$. Moreover, if an orientation preserving homeomorphism $h:\bar{\mathbb{R}}\to\bar{\mathbb{R}}$ with $h(\infty )=\infty$ satisfies
\begin{equation}
\label{eq:qs-real_line}
\frac{1}{M}\leq \frac{h(x+t)-h(x)}{h(x)-h(x-t)}\leq M
\end{equation}
for all $x\in\mathbb{R}$ and $t>0$, then $h$ is $M'$-quasisymmetric for some $M'=M'(M)\geq 1$ (see \cite{BeurlingAhlfors}). Note that $\frac{h(x+t)-h(x)}{h(x)-h(x-t)}=\mathrm{cr}(h(x-t),h(x),h(x+t), \infty )$ so that (\ref{eq:qs-circle}) implies (\ref{eq:qs-real_line}).  By \cite{BeurlingAhlfors}, we have that (\ref{eq:qs-real_line}) implies (\ref{eq:qs-circle}) with a different constant (see \cite{BonahonSaric} for the discussion).

A family of quasisymmetric maps of $S^1$ is said to be {\it normal} if every sequence  contains a subsequence which uniformly on $S^1$ converges to a quasisymmetric map. By the corresponding results on the normal families of quasiconformal maps (see \cite{GL}) we obtain the following condition for being a normal family. Let $\{ h_n\}_n$ be a family of $M$-quasisymmetric maps of $S^1$. If there exist $\delta >0$ and triples $(a_n,b_n,c_n)$ converging to a triple $(a,b,c)$ of distinct point on $S^1$ such that $\min \{ |h_n(a_n)-h_n(b_n)|,|h_n(b_n)-h_n(c_n)|,|h_n(c_n)-h_n(a_n)|\}\geq \epsilon$ then $\{ h_n\}_n$ is a normal family (see \cite[\S 2.4, page 70]{LV}).

\section{Proof of necessity of condition (\ref{eq:qs_horocycle}) in Theorem \ref{thm:main}}

Assume that $h:S^1\to S^1$ is a quasisymmetric map and we need to prove that $s_h:\mathcal{F}\to\mathbb{R}$ satisfies (\ref{eq:qs_horocycle}).
Fix a tip $p\in S^1$ and a complementary  ideal triangle $\Delta_p$ of $\mathcal{F}$ with one ideal vertex $p$. Let $\gamma_p :\mathbb{H}\to \mathbb{D}$ be a M\"obius map such that $\gamma_p^{-1}(p)=\infty$ and $\gamma_p^{-1}(\Delta_p)$ is an ideal triangle with vertices $0$, $1$ and $\infty$.
 Let $\gamma_{h(p)}:\mathbb{H}\to\mathbb{D}$ be a M\"obius map such that the homeomorphism $\gamma_{h(p)}^{-1} \circ h\circ\gamma_p$ of the extended real axis $\bar{\mathbb{R}}=\mathbb{R}\cup\{\infty\}$ fixes $0$, $1$ and $\infty$. Since the constant of quasisymmetry of $\gamma_{h(p)}^{-1} \circ h\circ\gamma$ equals to the constant of quasisymmetry of $h$ for all choices of $p$ and $\gamma_p$,
the condition (\ref{eq:qs_horocycle}) follows from (\ref{eq:qs-real_line}) applied to $x-t,x,x+t\in\mathbb{Z}$ because a horocycle in $\mathbb{H}$ based at $\infty$ is a Euclidean line parallel to the real axis and the restriction of the hyperbolic metric to this horocycle is a Euclidean metric scaled by a constant (see \cite{Saric1}). 

\section{Proof of sufficiency of condition (\ref{eq:qs_horocycle}) in Theorem 1.1}

The proof of the sufficiency of the condition (\ref{eq:qs_horocycle}) is by the contradiction (see \cite{Saric1}).  Assume that $s:\mathcal{F}\to\mathbb{R}$ satisfies condition (\ref{eq:qs_horocycle}).  
Note that we are only given the shear function without a map of $S^1$. We first define a {\it developing map} $h_s$  on the complementary ideal triangles of $\mathcal{F}$ into $\mathbb{D}$. On the triangle $\Delta_0$ with vertices $1$, $i$ and $-1$, we set $h_s$ to be the identity. Let $\Delta$ be an arbitrary complementary ideal triangle of $\mathcal{F}$. We connect $\Delta_0$ to $\Delta$ by an oriented geodesic arc $l$ with the initial point in $\Delta_0$ and consider the set of edges $\{ f_1,f_2,\ldots ,f_k\}$ of $\mathcal{F}$ that intersect $l$ indexed by the order of their intersection points with $l$. Each $f_i$ divides the unit disk $\mathbb{D}$ into two hyperbolic half-planes and we orient $f_i$ such that the half-plane containing $\Delta_0$ is to the left of $f_i$. Let $T_{f_i}^{s(f_i)}$ be the hyperbolic translation with the translation length $|s(f_i)|$, the translation axis $f_i$ and the initial point of $f_i$ (for the chosen orientation) is repelling for $T_{f_i}^{s(f_i)}$ if $s(f_i)\geq 0$, otherwise it is attracting. We define 
$$h_s|_{\Delta}=T_{f_1}^{s(f_1)}\circ T_{f_2}^{s(f_2)}\circ\cdots \circ T_{f_k}^{s(f_k)}.$$ 
The developing map $h$ is defined on each complementary ideal triangle of $\mathcal{F}$ to be a M\"obius map and it is discontinuous on an edge $f\in\mathcal{F}$ if and only if $s(f)\neq 0$. The restriction of the developing map $h$ to the two triangles adjacent to $f$ differ by the pre-composition with $T_{f}^{s(f)}$. Since $T_{f}^{s(f)}$ fixes the endpoints of $f$, we conclude that $h_s$ is well-defined on the endpoints of each edge $f\in\mathcal{F}$. Thus $h_s$ extends by the continuity to the endpoints of the edges of $\mathcal{F}$ which is a dense subset of $S^1$. The developing map $h_s$ is preserving the cyclic order thus it is  injective.

\subsection{$h_s$ is a homeomorphism} 
 
 We  establish that $h_s$ extends to a homeomorphism of $S^1$. In order to do so, we need a preliminary lemma.

\begin{lemma}
\label{lem:union_covers_cont} Let $\{\Delta_n\}_{n\in\mathbb{N}}$ be the family of complementary triangles for the Farey tesselation $\mathcal{F}$ and $h_s$ a developing map for $s:\mathcal{F}\to\mathbb{R}$.  
If $\cup_{n\in\mathbb{N}}h_s(\Delta_n)=\mathbb{D}$ then the developing map $h_s$ extends by the continuity to a homeomorphism of $S^1$. 
\end{lemma}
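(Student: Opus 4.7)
The plan is to argue the contrapositive: if $h_s$ does \emph{not} extend to a homeomorphism of $S^1$, then $\bigcup_n h_s(\Delta_n)$ omits a nonempty open subset of $\mathbb{D}$. Let $V\subset S^1$ denote the (dense) set of vertices of $\mathcal{F}$ on which $h_s$ is already defined. Because $h_s$ preserves cyclic order, at every $x\in S^1$ the one-sided limits $\bar h_s^{\pm}(x)$ along $V$ exist. There are exactly two ways the extension can fail to give a homeomorphism of $S^1$: either $\bar h_s^{-}(x)\neq \bar h_s^{+}(x)$ at some $x\in S^1$, or the one-sided limits agree everywhere but the resulting continuous, monotone extension, whose image is a closed connected arc of $S^1$, is not surjective. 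In each case I will extract a non-degenerate open arc $I\subset S^1$ with endpoints $y_-,y_+$ such that $h_s(V)\cap I=\emptyset$.

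Once $I$ is in hand, let $g\subset\mathbb{D}$ be the hyperbolic geodesic with ideal endpoints $y_\pm$ and let $H_+,H_-$ be the two open half-planes of $\mathbb{D}\setminus g$, with $H_+$ the one whose ideal boundary arc is $I$. Every triangle $\Delta_n$ of $\mathcal{F}$ has its three ideal vertices in $V$, so $h_s(\Delta_n)$ is an ideal triangle whose three ideal vertices lie in $S^1\setminus I$. Such a triangle, being the hyperbolic convex hull of its three vertices, is contained in the closed half-plane $\overline{H_-}$. Therefore $\bigcup_n h_s(\Delta_n)\subset \overline{H_-}$, which misses the nonempty open set $H_+\subset\mathbb{D}$, contradicting the hypothesis $\bigcup_n h_s(\Delta_n)=\mathbb{D}$. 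This yields the desired continuous extension; it is automatically injective by cyclic-order preservation and surjective by the same half-plane argument, hence is a homeomorphism of $S^1$.

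The only delicate point is the dichotomy producing $I$. If $h_s$ is discontinuous at $x$, monotonicity places all $h_s$-images of vertices on one side of $x$ on one side of the gap between $\bar h_s^{-}(x)$ and $\bar h_s^{+}(x)$, and the other vertices on the other side, so $I$ is the open arc between $\bar h_s^{-}(x)$ and $\bar h_s^{+}(x)$ avoided by $h_s(V)$; if $h_s$ extends continuously but not surjectively, the image of $V$ lies in a proper closed arc of $S^1$ and $I$ is taken to be the complementary open arc. Beyond this bookkeeping, the proof is entirely geometric: it reduces the failure of extension to the existence of an unused chord $g$ that shields an open half-plane from every image triangle, which is the step that lets the hypothesis $\bigcup_n h_s(\Delta_n)=\mathbb{D}$ close the argument.
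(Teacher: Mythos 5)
Your argument is correct in substance but organized differently from the paper's. You argue the contrapositive: any failure of the continuous extension is converted into a nondegenerate open arc $I\subset S^1$ disjoint from the images of the vertices, and then the geodesic $g$ spanning $I$ shields the open half-plane facing $I$ from every image triangle (each $h_s(\Delta_n)$ being the convex hull of three ideal points in the complementary closed arc), contradicting $\bigcup_{n}h_s(\Delta_n)=\mathbb{D}$. The paper instead builds the extension directly: it first proves that $h_s(X)$ is dense in $S^1$ using exactly your half-plane/convexity mechanism, then defines $h_s$ at non-vertex points as the limit of nested image half-planes, and checks injectivity, surjectivity and continuity one at a time, invoking the covering hypothesis again to force the Euclidean size of the image edges to zero. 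So the central geometric step is shared; what your route buys is brevity and a single clean use of the hypothesis, at the cost of leaning on standard but unproved monotone-extension facts (existence of the one-sided limits of $h_s$ along the dense vertex set $V$, and that a continuous map agreeing on a dense set with a strictly cyclic-order-preserving injection is itself injective, hence surjective). Two points should be patched. First, in the jump case, if the discontinuity point $x$ is itself a vertex of $\mathcal{F}$, the open arc between $\bar h_s^{-}(x)$ and $\bar h_s^{+}(x)$ may contain the point $h_s(x)$, so it need not avoid $h_s(V)$; take instead the subarc between $h_s(x)$ and whichever one-sided limit differs from it, which does avoid $h_s(V)$. Second, your dichotomy as stated omits the a priori possibility that the extension is continuous and surjective but not injective; this is excluded by the injectivity observation you make at the end (a non-injective weakly monotone continuous circle map would either collapse an arc containing two vertices or violate the strict cyclic order of four vertex images), so that exclusion should be stated where the dichotomy is set up rather than after it.
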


\begin{proof}
Let $\{\Delta_n\}_{n\in\mathbb{N}}$ be the family of all (closed) complementary ideal triangles of $\mathcal{F}$. Then $\cup_{n\in\mathbb{N}}\Delta_n=\mathbb{D}$ and the set $X$ of ideal vertices of $\{\Delta_n\}_{n\in\mathbb{N}}$ is dense in $S^1$. 
We established that $h_s$ is defined on $X$ and it preserves the cyclic order.  Since  $\cup_{n\in\mathbb{N}}h_s(\Delta_n)=\mathbb{D}$ we have that $h_s(X)$ is dense in $S^1$. Indeed, if $h_s(X)$ were not dense in $S^1$ then an open arc $I$ of $S^1$ would contain no points of $h_s(X)$. Let $g$ be a hyperbolic geodesic with ideal endpoints equal to the endpoints of $I$. Then the hyperbolic half-plane with boundary geodesic $g$ that faces $I$ cannot intersect any $h_s(\Delta_n)$ by the convexity which contradicts $\cup_{n\in\mathbb{N}}h_s(\Delta_n)=\mathbb{D}$. Therefore $h_s(X)$ is dense in $S^1$.

Let $z\in S^1\setminus X$. Then $z$ is the intersection of the ideal boundary arcs of a nested sequence of hyperbolic half-planes $P_n$ bounded by the edges $f_n^z$ of $\mathcal{F}$. Since $h_s$ preserves cyclic order, it follows that the half-planes $h_s(P_n)$ with boundary geodesics $h_s(f_n^z)$ are nested. If $h_s(f_n^z)$ accumulate to a geodesic $g$ then by the cyclic order preserving property of $h_s$ the half-plane with boundary $g$ (which does not contain  $h_s(f_n^z)$) is disjoint from $\cup_{n\in\mathbb{N}}h_s(\Delta_n)$ which is a contradiction. Therefore the sequence $h_s(f_n^z)$ accumulates to a single point $w\in S^1$ and we define $h_s(z)=w$. Thus $h_s$ extends to a map of $S^1$ which is injective because $h_s$ preserves cyclic order and for any open arc $I$ of $S^1$  there an edge $f_I$ of $\mathcal{F}$ such that $I$ contains both endpoints of $f_I$. To see that the map $h_s$ is onto, let $w\in S^1$ be arbitrary.  Since $\{ h_s(\Delta_n)\}$ is an ideal triangulation of $\mathbb{D}$ we have that either $w$ is an ideal vertex of some triangle $h_s(\Delta_{n(w)})$ or it is an accumulation of distinct edges $\{ h_s(f_{n_k})\}_{k=1}^{\infty}$. In the former case we find a vertex $z$ of $\Delta_{n(w)}$ such that $h_s(z)=w$. In the later case, the sequence of distinct edges $\{ f_{n_k}\}_k$ is nested because $h_s^{-1}$ also preserves cyclic order. The limit $z$ of $\{ f_{n_k}\}_k$ satisfies $h_s(z)=w$. Therefore $h$ is onto as well.

Next we prove that $h_s:S^1\to S^1$ is continuous. 
Let $z\in S^1$ be a vertex of $\mathcal{F}$. Choose two edges $f_1$ and $f_2$ of $\mathcal{F}$ with a common endpoint $z$ such that the other endpoints $z_1$ and $z_2$ are separated by $z$. Then the arc $(z_1,z_2)\subset S^1$ is a neighborhood of $z$. By choosing a sequence of mutually disjoint pairs of edges $(f_1^n,f_2^n)$ as above, the corresponding arcs $(z_1^n,z_2^n)$ are a basis of the neighborhoods of $z$ because $\cup_{n\in\mathbb{N}}\Delta_n=\mathbb{D}$ implies that the Euclidean sizes of $f_1^n$ and $f_2^n$ go to zero. The same is true for their images under $h_s$ by $\cup_{n\in\mathbb{N}}h_s(\Delta_n)=\mathbb{D}$. Therefore $h_s$ is continuous at $z$. 

If $z\in S^1\setminus X$ then a basis of neighborhoods of $z$ consists of the arcs on the ideal boundary of hyperbolic half-planes determined by a sequence of edges of $\mathcal{F}$ that separate $z$ from $0\in\mathbb{D}$. The same is true for their image under $h_s$ and the point $h_s(z)$. Therefore $h_s$ is continuous at $z\in S^1\setminus X$. Thus $h_s:S^1\to S^1$ is continuous and since $S^1$ is compact and Hausdorff it follows that $h_s$ is a homeomorphism which realizes $s$.
\end{proof}
 
 We are ready to prove that the developing map is a homeomorphism. 
 
\begin{lemma}
\label{lem:homeo}
Let $s:\mathcal{F}\to\mathbb{R}$ be a shear function satisfying the condition from Theorem \ref{thm:main}. Then the developing map $h_s$ extends to a homeomorphism of $S^1$.
\end{lemma}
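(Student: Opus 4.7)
The plan is to apply Lemma \ref{lem:union_covers_cont}: it suffices to verify the covering condition $\bigcup_{n \in \mathbb{N}} h_s(\Delta_n) = \mathbb{D}$. I first reduce this to a local statement at each tip. Let $U=\bigcup_n h_s(\Delta_n)$. Since every edge of $\mathcal{F}$ is shared by two triangles, $U$ is open in $\mathbb{D}$; and provided the image ideal triangulation is locally finite in $\mathbb{D}$, $U$ is also closed. The only possible source of non-local-finiteness is accumulation of image triangles at some interior geodesic, which happens precisely when, at some tip $p$, the image fan $h_s(\mathcal{F}^p)$ fails to \emph{complete}, i.e., its secondary endpoints do not accumulate to $h_s(p)$ on both sides. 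Thus, if every fan is complete, $U$ is a nonempty clopen subset of the connected space $\mathbb{D}$, hence $U=\mathbb{D}$.

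Fan completion at $p$ is equivalent to divergence of both tails $\sum_{k\geq m}\delta_k^p$ and $\sum_{k\leq m-1}\delta_k^p$: cumulative horocyclic arc length must tend to infinity for the edges to wind around toward $h_s(p)$. The condition (\ref{eq:qs_horocycle}), in the limit $k\to\infty$ for fixed $m$, forces the two tails to be simultaneously finite or infinite — if one were finite and the other infinite, the ratio would escape $[1/M, M]$. So the remaining task is to rule out both tails being simultaneously finite at any tip.

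This is the main obstacle. The condition (\ref{eq:qs_horocycle}) at a single tip in isolation is consistent with both tails being finite (symmetric summable sequences satisfy the ratio bound), so global structure must enter. Normalizing to the upper half plane with $p\mapsto\infty$, both-tails-finite translates to the induced monotone map $H$ on $\mathbb{Z}$ having bounded range $[L_-,L_+]$, and monotonicity then extends this to $H(\mathbb{Q})\subset[L_-,L_+]$. I would try to use the consistency of $s$ across adjacent tips — each edge's shear contributes to the fan data at both endpoints — to propagate boundedness along the triangulation, ultimately contradicting the normalization $h_s|_{\Delta_0}=\operatorname{id}$, which pins $h_s$ at three specific tips and forces unbounded behavior elsewhere through the recursive structure of the developing map. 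A cleaner alternative, in line with the abstract's emphasis on normal families, is to approximate $h_s$ by quasisymmetric maps realizing the shears on finite sub-tessellations of $\mathcal{F}$, establish a uniform quasisymmetric constant via (\ref{eq:qs_horocycle}), and extract a convergent subsequence via the normal family criterion recalled in Section 2; the resulting limit is continuous and agrees with $h_s$ on the dense set of Farey vertices, so it provides the desired homeomorphic extension.
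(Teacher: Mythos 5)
Your reduction to the covering condition $\bigcup_n h_s(\Delta_n)=\mathbb{D}$ via Lemma \ref{lem:union_covers_cont} matches the paper, and the open/closed skeleton is fine, but the key step is asserted rather than proved, and it is asserted incorrectly. You claim that local finiteness can only fail ``when, at some tip $p$, the image fan $h_s(\mathcal{F}^p)$ fails to complete.'' That dichotomy is false as a structural statement: a boundary geodesic of $\bigcup_n h_s(\Delta_n)$ can also be accumulated by images of a sequence of edges of $\mathcal{F}$ that changes fans infinitely often, i.e.\ a sequence containing infinitely many consecutive triples $(f_{k_i},f_{k_i+1},f_{k_i+2})$ in which the common endpoint of the first pair differs from that of the second pair; in that situation the limiting geodesic shares no endpoint with the image edges and no fan need be incomplete (one can build such examples with every fan complete but shears blowing up along a zig--zag path). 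The paper treats exactly this as a separate case and rules it out by a geometric argument: the third sides of the two image triangles adjacent to $h_s(f_{k_i+1})$ shrink at opposite endpoints of $h_s(f_{k_i+1})$, forcing $|s(f_{k_i+1})|\to\infty$, which contradicts the boundedness of shears implied by (\ref{eq:qs_horocycle}). Your proposal never addresses this case, so the ``clopen'' argument does not close.

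In the fan case your analysis also goes astray. You claim that condition (\ref{eq:qs_horocycle}) at a single tip is consistent with both tails $\sum_{k\ge m}\delta_k^p$ and $\sum_{k\le m-1}\delta_k^p$ being finite (``symmetric summable sequences satisfy the ratio bound''), and you therefore invoke global propagation across tips, or alternatively a normal-family argument with quasisymmetric approximants of uniform constant. The premise is wrong: with a single constant $M$ valid for \emph{all} $m\in\mathbb{Z}$ and $k\ge 0$, summability of either tail already violates (\ref{eq:qs_horocycle}). Indeed, taking $m=k+1$ and letting $k\to\infty$, the numerator $\delta_{k+1}^p+\cdots+\delta_{2k+1}^p$ is a tail of a convergent series and tends to $0$, while the denominator is at least $\delta_0^p>0$, so the ratio drops below $1/M$; this is precisely the paper's one-line computation (there phrased as the reciprocal ratio tending to $\infty$). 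So the ``main obstacle'' you identify does not exist, and the machinery you propose for it is either unnecessary or problematic: establishing a uniform quasisymmetry constant for maps realizing truncations of $s$ is essentially the sufficiency direction of Theorem \ref{thm:main} itself (the content of Lemma \ref{lem:one_fan_qs} and the normality argument that \emph{uses} the present lemma), so appealing to it here is circular in the paper's logical order and is in any case not proved in your sketch.
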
 

\begin{proof}
By Lemma \ref{lem:union_covers_cont}, it is enough to prove  that $\cup_{n\in\mathbb{N}}h_s(\Delta_n)=\mathbb{D}$. Assume on the contrary that $\cup_{n\in\mathbb{N}}h_s(\Delta_n)\neq \mathbb{D}$. Since $h_s$ preserves cyclic order it also preserves separation of geodesics. It follows that any boundary component of $\cup_{n\in\mathbb{N}}h_s(\Delta_n)$ is not contained in the set. Thus a boundary component is accumulated by a sequence $\{ h_s(f_k)\}_{k=1}^{\infty}$, where $\{ f_k\}_{k=1}^{\infty}$ is a sequence of distinct edges of $\mathcal{F}$. There are two possibilities: either $\{ f_k\}_{k\geq k_0}$ share a common ideal endpoint or there exists an infinite sequence of consecutive triples $\{ (f_{k_i},f_{k_i+1},f_{k_i+2})\}_{i=1}^{\infty}$ such that the common endpoint of $(f_{k_i},f_{k_i+1})$ is different from the common endpoint of $(f_{k_i+1},f_{k_i+2})$.

\begin{figure}[h]
%\ShowGrid
\leavevmode \SetLabels
\L(.3*.05) $1(a)$\\
\L(.66*.05) $1(b)$\\
\L(.26*.78) $\delta_1$\\
\L(.25*.6) $\delta_2$\\
\L(.32*.38) $\delta_3$\\
\L(.4*.7) $g$\\
\L(.7*.7) $s_i$\\
\L(.626*.2) $h_s(f_{k_i})$\\
\endSetLabels
\begin{center}
\AffixLabels{\centerline{\epsfig{file =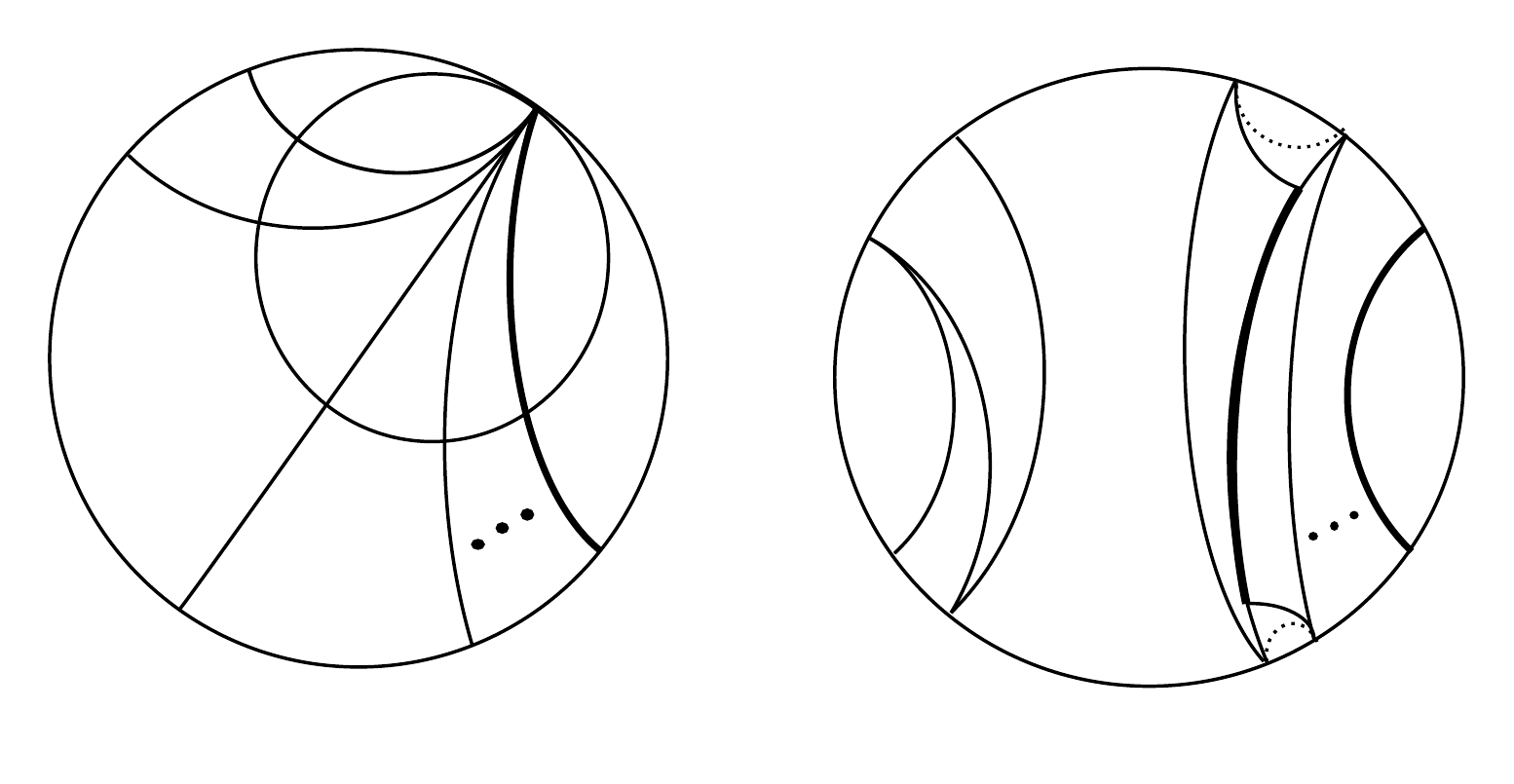,width=10.0cm,angle=0} }}
\vspace{-20pt}
\end{center}
\caption{The continuity of a developing map.}
\end{figure}

Assume we are in the former case-that is, there is a sequence $\{ f_k\}_{k=1}^{\infty}$ of consecutive edges of $\mathcal{F}$ in a tip $p$ such that $h_s(f_k)$ converges to a geodesic $g$. All geodesics $h_s(f_k)$ and $g$ share the same endpoint $h_s(p)$. We prove that (\ref{eq:qs_horocycle}) fails. Let $C$ be a horocycle based at $h_s(p)$ and denote by $\delta_k$ the length of the arcs of $C$ between $f_k$ and $f_{k+1}$. Since $h_s(f_k)\to g$ it follows that $\sum_{k=1}^{\infty}\delta_k<\infty$ (see Figure 1 (a)). This implies that $$\lim_{k\to\infty} \frac{\delta_1+\delta_2+\cdots +\delta_k}{\delta_{k+1}+\delta_{k+2}\cdots +\delta_{2k}}=\infty$$
which contradicts (\ref{eq:qs_horocycle}). 

In the later case, the triples $(h_s(f_{k_i}),h_s(f_{k_i+1}),h_s(f_{k_i+2}))$ converge to a fixed geodesic $g$ as $i\to\infty$ such that $g$ shares no common endpoint with the geodesics of the sequence. Let $s_i=|s(f_{k_i+1})|$.  Since the common endpoints of $(h_s(f_{k_i}),h_s(f_{k_i+1}))$ and $(h_s(f_{k_i+1}),h_s(f_{k_i+2}))$ are different, the Euclidean size of the third boundary geodesic of the complementary triangle of $h(\mathcal{F})$ with two sides $(h_s(f_{k_i}),h_s(f_{k_i+1}))$ goes to zero and the same is true for the third boundary geodesic of the triangle with sides $(h_s(f_{k_i+1}),h_s(f_{k_i+2}))$. The third boundary geodesics meet $h_s(f_{k_i+1})$ at opposite ideal endpoints (see Figure 1 (b)). This implies $\lim_{i\to\infty} s_i=\infty$ which again contradicts (\ref{eq:qs_horocycle}).
\end{proof}

\subsection{The sufficiency of condition (\ref{eq:qs_horocycle}) for a single fan}

Next we  prove Theorem \ref{thm:main} for the special case when the shear function is zero everywhere except on a single fan of geodesics. This is the main new ingredient (compared to \cite{Saric1}) in the proof of Theorem \ref{thm:main} for the general case and it makes the proof elementary. The proof  makes no use of the Douady-Earle extension unlike in the argument of \cite{Saric1} (or its duplicate in \cite{FanHu}).

\begin{lemma}
\label{lem:one_fan_qs}
Let $s:\mathcal{F}\to\mathbb{R}$ be a shear function that is equal to zero everywhere except on a single fan $\mathcal{F}^p=\{ f_n^p\}_{n\in\mathbb{Z}}$ with tip $p$. If there exists $M\geq 1$ such that for all $m\in\mathbb{Z}$ and $k\in\mathbb{N}\cup\{ 0\}$,
$$\frac{1}{M}\leq\frac{\delta_m^p+\delta_{m+1}^p+\cdots +\delta_{m+k}^p}{\delta_{m-1}^p+\delta_{m-2}^p+\cdots +\delta_{m-k-1}^p}\leq M
$$
then $s$ is induced by an $M'$-quasisymmetric map $h_s:S^1\to S^1$, where $M'$ depends only on $M$. 
\end{lemma}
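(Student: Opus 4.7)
The plan is to choose upper half-plane coordinates so that $p=\infty$ and so that the base triangle $\Delta_0$ becomes the ideal triangle with vertices $0,1,\infty$; the fan $\mathcal{F}^p$ then consists of the vertical rays $\{\mathrm{Re}\,z=n\}_{n\in\mathbb{Z}}$. Because Möbius conjugation preserves quasisymmetry constants and the hypothesis (\ref{eq:qs_horocycle}) is stated in horocyclic coordinates at $p$, it suffices to verify quasisymmetry for the induced boundary map $h_s:\bar{\mathbb{R}}\to\bar{\mathbb{R}}$; note that $h_s(\infty)=\infty$ because every fan translation $T_{f_n^p}^{s(f_n^p)}$ fixes $\infty$. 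The key structural observation is that any two complementary triangles of $\mathcal{F}$ lying in the same vertical strip $S_n=\{n<\mathrm{Re}\,z<n+1\}$ are connected in $\mathcal{F}$ by a path crossing only \emph{non}-fan edges (across which the shear vanishes), while the path from $\Delta_0$ into $S_n$ crosses exactly the fan edges $f_1^p,\ldots,f_n^p$ (or $f_0^p,\ldots,f_{n+1}^p$ for $n\leq -1$). The developing formula therefore gives a \emph{single} affine Möbius map on each closed strip $\overline{S_n}$, so $h_s|_{\mathbb{R}}$ is piecewise linear with breakpoints exactly at $\mathbb{Z}$, and on $[n,n+1]$ it has constant slope $\delta_n:=h_s(n+1)-h_s(n)$.

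Setting $y_n:=h_s(n)$, the hypothesis (\ref{eq:qs_horocycle}) reads
\[
\frac{1}{M}\leq \frac{y_{m+k+1}-y_m}{y_m-y_{m-k-1}}\leq M \qquad (m\in\mathbb{Z},\ k\geq 0),
\]
which is exactly the Beurling--Ahlfors condition (\ref{eq:qs-real_line}) at integer centers $x=m$ with integer increments $t=k+1$. To promote this to arbitrary $x\in\mathbb{R}$ and $t>0$, I would exploit the piecewise-linear structure together with the $k=0$ case of the hypothesis, which gives $\delta_m/\delta_{m-1}\in[1/M,M]$, so that slopes of adjacent linear pieces are uniformly comparable. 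For $t\geq 1$ I would sandwich both $h_s(x+t)-h_s(x)$ and $h_s(x)-h_s(x-t)$ between integer differences $y_j-y_i$ whose indices lie within one unit of $x\pm t$ and $x$, and then apply the hypothesis together with one or two adjacent-slope factors to compare the sandwiching quantities to a symmetric ratio; this yields a bound depending only on $M$. For $0<t<1$, the intervals $[x-t,x]$ and $[x,x+t]$ meet at most three consecutive linear pieces, so both numerator and denominator of $(h_s(x+t)-h_s(x))/(h_s(x)-h_s(x-t))$ are positive combinations of slopes drawn from that small block, and the adjacent-slope bound controls their ratio directly. The Beurling--Ahlfors theorem recalled in Section~2 then produces an $M'=M'(M)$ such that $h_s$ is $M'$-quasisymmetric (continuity having already been established in Lemma~\ref{lem:homeo}).

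The step I expect to be the main obstacle is the case analysis in the Beurling--Ahlfors verification, and in particular the regime in which $t$ is comparable to $1$ and $x$ sits near an integer breakpoint: here one must simultaneously deploy the symmetric integer-shift inequality and the adjacent-slope bound, and must choose the sandwiching integers so that the resulting quotient lands in the symmetric form actually controlled by the hypothesis. The remaining ingredients---the Möbius normalization, the reduction of $h_s|_\mathbb{R}$ to a piecewise-affine map with slopes $\delta_n$, and the final passage from condition (\ref{eq:qs-real_line}) to full quasisymmetry---are structural and should follow routinely from the material already developed in Sections~2 and~3.
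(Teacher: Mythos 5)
Your proposal follows essentially the same route as the paper: normalize so the tip is at $\infty$ and the fan edges are the vertical geodesics over $\mathbb{Z}$, observe that the developing map is affine on each strip so its boundary restriction is piecewise linear with breakpoints at $\mathbb{Z}$ and slopes $\delta_n$, and verify the Beurling--Ahlfors condition (\ref{eq:qs-real_line}) by combining the symmetric integer-shift hypothesis with uniformly bounded adjacent-slope ratios, splitting into a small-$t$ and a large-$t$ regime. The step you flag as the main obstacle is exactly what the paper resolves by taking the threshold at $t=5$ (bounding the ratio by $M^{10}$ via chained adjacent slopes for $t\leq 5$) and, for $t>5$, sandwiching with integers $a<b<c$ satisfying $c-b=(b-a)+2$ and paying a factor $M^2+M+1$ to pass from $h_1(c)$ to the symmetric point $h_1(c-2)$, yielding $M'=\max\{3M^3,M^{10}\}$.
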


\begin{proof}
By Lemma \ref{lem:homeo}, $s:\mathcal{F}\to\mathbb{R}$ is induced by a homeomorphism $h_s:S^1\to S^1$. We first conjugate $h_s$ by M\"obius maps such that it maps $\bar{\mathbb{R}}$ onto itself and the fan of geodesics $\mathcal{F}^p$ is replaced by the fan with tip at $\infty$.

Fix a complementary triangle $\Delta_p$ of $\mathcal{F}$ with one ideal vertex $p$. 
Let $\gamma_p:\mathbb{H}\to \mathbb{D}$ be a M\"obius map that sends $\infty$ to $p$ and an ideal triangle in $\mathbb{H}$ with vertices $0$, $1$ and $\infty$ onto the triangle $\Delta_p$. Then $\gamma_p^{-1}(\mathcal{F})$ 
is an ideal triangulation of $\mathbb{H}$. This ideal triangulation $\gamma_p^{-1} (\mathcal{F})$ is usually called the Farey tesselation of $\mathbb{H}$.
The endpoints of all edges of $\gamma_p^{-1} (\mathcal{F}^p)$ are precisely the integers $\mathbb{Z}$ with a common endpoint $\infty$. Let $\gamma :\mathbb{H}\to\mathbb{D}$ be the unique M\"obius map such that $h_1=\gamma^{-1}\circ h\circ \gamma_p$ fixes $0$, $1$ and $\infty$ on the extended real axis $\bar{\mathbb{R}}$.

The shear function $s$ pulls-back to a shear function $s\circ\gamma_p :\gamma_p^{-1}(\mathcal{F})\to \mathbb{R}$ for the 
 homeomorphism $h_1:\bar{\mathbb{R}}\to\bar{\mathbb{R}}$.
Notice that $s\circ\gamma_p$ is non-zero only on the edges of $\gamma_p^{-1}(\mathcal{F})$ that have an endpoint $\infty$. In order to simplify the notation, we replace $s\circ\gamma_p$ with $s$ for the rest of the proof.
 
To geometrically describe $h_1$, we take $h_1|_{[0,1]}=id$. An edge $f_n\in\gamma_p^{-1}(\mathcal{F})$ with endpoints $n$ and $\infty$ is given an orientation to the left as seen from the ideal hyperbolic triangle with vertices $0$, $1$ and $\infty$. 
 Given $a\in\mathbb{R}$ and $f_n\in\gamma_p^{-1}(\mathcal{F})$, let $T_{f_n}^a$ be a hyperbolic translation with the oriented axis $f_n$ and the signed translation length $a$.

If $x\in (n,n+1]$ for $n\in\mathbb{N}$, we have $h_1(x)=T_{f_1}^{s(f_1)}\circ T_{f_2}^{s(f_2)}\circ\cdots\circ T_{f_n}^{s(f_n)}(x)$ and if $x\in (-n,-n+1]$ for $n\in\mathbb{N}$,  we have $h_1(x)=T_{f_0}^{-s(f_0)}\circ T_{f_{-1}}^{-s(f_{-1})}\circ\cdots\circ T_{f_{-n}}^{-s(f_{-n+1})}(x)$. Then $h_1$ realizes $s$ on the edges of $\gamma_p^{-1}(\mathcal{F})$ and it is an increasing homeomorphism of $\bar{\mathbb{R}}$. Also note that the shear of $h_1$ on each edge $f$ of $\gamma_p^{-1}(\mathcal{F})$ which does not have an endpoint at $\infty$ is zero (see Figure 2 for the graph of $h_1$). 

\begin{figure}[h]
%\ShowGrid
\leavevmode \SetLabels
\endSetLabels
\begin{center}
\AffixLabels{\centerline{\epsfig{file =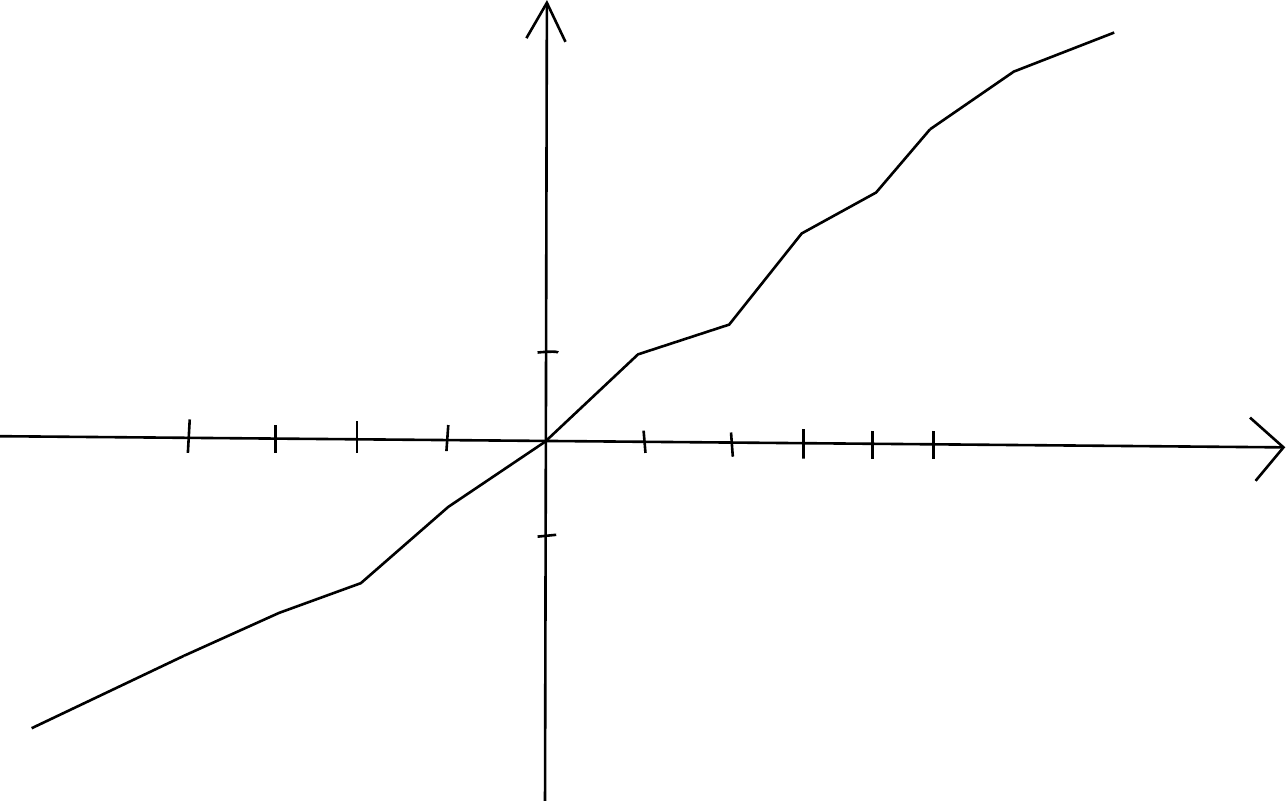,width=7.0cm,angle=0} }}
\vspace{-20pt}
\end{center}
\caption{The graph of the developing map $h_1$.}
\end{figure}

We first estimate $\frac{h_1(x+t)-h_1(x)}{h_1(x)-h_1(x-t)}$ for an arbitrary $x$ and $0<t\leq 5$. Note that $h_1$ is an increasing piecewise linear function on $\mathbb{R}$ with non-smooth points at $\mathbb{Z}$. The mean value theorem implies that $$h_1(x+t)-h_1(x)\leq [\max_{0\leq t_1\leq t}\frac{dh_1}{dx}(x+t_1)]t$$ and 
$$h_1(x)-h_1(x-t)\geq [\min_{0\leq t_1\leq t}\frac{dh_1}{dx}(x-t_1)]t.$$
Thus $$\frac{h_1(x+t)-h_1(x)}{h_1(x)-h_1(x-t)}\leq \frac{\max_{0\leq t_1\leq t}\frac{dh_1}{dx}(x+t_1)}{\min_{0\leq t_1\leq t}\frac{dh_1}{dx}(x-t_1)}\leq M^{10}$$
by the definition of $h_1$ and by $\max\{ \frac{\delta_{i+1}}{\delta_i},\frac{\delta_{i}}{\delta_{i+1}}\}=e^{|s(f_i)|}\leq M$ for each edge $f_i\in\gamma_p^{-1}(\mathcal{F})$.

In a similar fashion we obtain
$$\frac{h_1(x+t)-h_1(x)}{h_1(x)-h_1(x-t)}\geq \frac{\min_{0\leq t_1\leq t}\frac{dh_1}{dx}(x+t_1)}{\max_{0\leq t_1\leq t}\frac{dh_1}{dx}(x-t_1)}\geq M^{-10}$$

Assume now that $t>5$. To estimate $\frac{h_1(x+t)-h_1(x)}{h_1(x)-h_1(x-t)}$ from the above, let $a,b,c\in\mathbb{Z}$ be such that $[x-t,x]\supset [a,b]$, $[x,x+t]\subset [b,c]$ and $(b-a)+2=c-b$. 
The ratio $\frac{h_1(x+t)-h_1(x)}{h_1(x)-h_1(x-t)}$ is independent of  pre-composing $h_1$ by a translation $x\mapsto x-b$ since the translation preserves $\infty$. Thus we can assume that $b=0$.
Since $h_1(x)$ is increasing, we have
$$\frac{h_1(x+t)-h_1(x)}{h_1(x)-h_1(x-t)}\leq \frac{h_1(c)-h_1(b)}{h_1(b)-h_1(a)}.$$
We further normalize $h_1(x)$ by post-composing it with an affine map $x\mapsto Ax+B$ such that it equals the identity on $[0,1]$. Again $\frac{h_1(x+t)-h_1(x)}{h_1(x)-h_1(x-t)}$ is not affected.

In this normalization $h_1(x)$ is obtained by shearing by the amount $s$ with the identity on the initial complementary region between the vertical half-lines with the initial points $0$ and $1$ on $\mathbb{R}$. By a direct computation we obtain
\begin{equation}
\label{eq:h_1(c)}
\begin{split}
h_1(c)=e^{s(f_{c-1})+s(f_{c-2})+s(f_{c-3})+\cdots +s(f_1)}+e^{s(f_{c-2})+s(f_{c-3})+s(f_{c-4})+\cdots +s(1)}+\\
e^{s(f_{c-3})+s(f_{c-4})+\cdots +s(f_1)}+e^{s(f_{c-4})+s(f_{c-5})+\cdots +s(f_1)}+\cdots +
e^{s(f_1)}+1.
\end{split}
\end{equation}
Since
$$
h_1(c-2)=e^{s(f_{c-3})+s(f_{c-4})+\cdots +s(f_1)}+e^{s(f_{c-4})+s(f_{c-5})+\cdots +s(f_1)}+\cdots +
e^{s(f_1)}+1,
$$
equation (\ref{eq:h_1(c)}) gives
$$
h_1(c)\leq (e^{s(f_{c-1})+s(f_{c-2})}+e^{s(f_{c-2})}+1)h_1(c-2)\leq (M^2+M+1)h_1(c-2).
$$
Since $h_1(b)=0$ we have
$$
\frac{h_1(c)-h_1(b)}{h_1(b)-h_1(a)}=\frac{h_1(c)}{-h_1(a)}\leq \frac{h_1(c-2)}{-h_1(a)}(M^2+M+1).
$$
Since $c-2$ and $a$ are symmetric in $b=0$, the condition (\ref{eq:qs_horocycle}) implies $\frac{h_1(c-2)}{-h_1(a)}\leq M$ and we obtained
$$
\frac{h_1(x+t)-h_1(x)}{h_1(x)-h_1(x-t)}\leq (M^2+M+1)M\leq 3M^3.
$$

An analogous reasoning gives a lower bound of $1/(3M^3)$ on $\frac{h_1(x+t)-h_1(x)}{h_1(x)-h_1(x-t)}$ and we established that $h_1$ is $M':=\max\{ 3M^3,M^{10}\}$-quasisymmetric.
\end{proof}

\subsection{The sufficiency in the general case}

We complete the proof that if a general shear function $s$ satisfies the condition (\ref{eq:qs_horocycle}) then the induced homeomorphism $h_s:S^1\to S^1$ is quasisymmetric. Assume on the contrary that $h_s:S^1\to S^1$ is not quasisymmetric and we seek a contradiction. A homeomorphism $h_s$ is not quasisymmetric if and only if there exists a sequence of quadruples of points $\{ (a_n,b_n,c_n,d_n)\}_{n=1}^{\infty}$ on $S^1$ given in the counterclockwise order such that $\mathrm{cr} (a_n,b_n,c_n,d_n)= 1$ and $\mathrm{cr}(h_s(a_n),h_s(b_n),h_s(c_n),h_s(d_n))\to\infty$ as $n\to\infty$.

Let $A_n:\mathbb{D}\to\mathbb{D}$ be the M\"obius map such that $A_n:1,i,-1,-i\mapsto a_n,b_n,c_n,d_n$. Let $\mathcal{F}_n=A_n^{-1}(\mathcal{F})$ be an ideal triangulation of $\mathbb{D}$ which maps onto the Farey tesselation $\mathcal{F}$ by $A_n$. Then $h_s\circ A_n$ maps $\mathcal{F}_n$ onto $h_s(\mathcal{F})$. Our assumption is equivalent to 
\begin{equation}
\label{eq:not_qs}
\mathrm{cr}(h_s\circ A_n(1),h_s\circ A_n(i),h_s\circ A_n(-1),h_s\circ A_n(-i))\to\infty
\end{equation}
as $n\to\infty$.

The idea of the proof is to choose a sequence of M\"obius maps $B_n$ such that $B_n\circ h_s\circ A_n$ is a normal sequence of quasisymmetric maps. A limiting map $h^{*}$ of a subsequence of $B_n\circ h_s\circ A_n$ is a quasisymmetric homeomorphism of $S^1$. Thus it satisfies $\mathrm{cr}(h^{*}(1),h^{*}(i),h^{*}(-1),h^{*}(-i))<\infty$ which contradicts (\ref{eq:not_qs}). The choice of $B_n$ and the normality of $B_n\circ h_s\circ A_n$ is not straight forward and is the core of the proof. In summary, we prove a quasisymmetry of $h_s$ by showing that $B_n\circ h_s\circ A_n$ has a subsequence that pointwise converges to a homeomorphism. 

\begin{remark}
The fact that the shears can be both positive and negative is what makes direct estimations of the developing maps in terms of their shear functions a more challenging problem. In the case of earthquakes the measures are positive which allows an easier estimates of the developing maps. Thus it is not a mere convenience that we use an indirect proof.
A reader should think of conditionally convergent series of real numbers that is not absolutely convergent. 
\end{remark}

We divide the argument into two cases based on the limiting shape of $\mathcal{F}_n$ as follows. Let $\Delta_n^0$ be a complementary triangle of $\mathcal{F}_n$ which contains $0\in\mathbb{D}$. Denote by $x_n,y_n,z_n\in S^1$ vertices of $\Delta_n^0$. Since $0$ is in $\Delta_n^0$ it follows that it is not possible that all three vertices of $\Delta_n^0$ converge to a single point of $S^1$.  Therefore, there exists a subsequence of $\mathcal{F}_{n}$, which for the simplicity is denoted by $\mathcal{F}_n$ again, such that either $(x_{n},y_{n},z_{n})\to (x,y,z)$ for $x,y,z$ distinct, or $(x_{n},y_{n},z_{n})\to (x,x,z)$ for $x\neq z$ as $n\to\infty$. 

\subsubsection{The triangulations converge}
In the first case the geodesic triangles $\Delta_{n}^0$ converge to the triangle $\Delta^0_{*}$ with vertices $x,y,z$. The triangulation $\mathcal{F}_{n}$ is obtained by taking the image of $\Delta_{n}^0$ under the group generated by the hyperbolic reflections in the sides of $\Delta_{n}^0$. The triangle $\Delta_{n}^0$ is said to be the {\it base triangle}.
Since $\Delta_n^0$ converges to an ideal geodesic triangle $\Delta_*^0$, it follows that $\mathcal{F}_n$ converges to an ideal triangulation $\mathcal{F}_*$ with the base triangle $\Delta_*^0$. The edges of $\mathcal{F}_*$ are images of the edges of $\Delta_*^0$ under the group generated by the hyperbolic reflections in the edges of $\Delta_*^0$.

Recall that $A_n:\mathbb{D}\to\mathbb{D}$ is the M\"obius map such that $A_n:1,i,-1,-i\mapsto a_n,b_n,c_n,d_n$ and $\mathcal{F}_n=A_n^{-1}(\mathcal{F})$.
Let $A_n':\mathbb{D}\to\mathbb{D}$ be the M\"obius map which maps $(x,y,z)$ onto $(x_n,y_n,z_n)$. Then  $A_n'(\mathcal{F}_{*})=\mathcal{F}_n$ because $\mathcal{F}_n$ and $\mathcal{F}_{*}$ are both obtained by repeated hyperbolic reflections in the sides of $\Delta_n^0$ and $\Delta_{*}^0$. 
Define $(a_n',b_n',c_n',d_n'):=(A_n')^{-1}(1,i,-1,-i)$. Notice that $A_n'\to id$ and thus $(a_n',b_n',c_n',d_n')\to (1,i,-1,-i)$.
Let $B_n:\mathbb{D}\to\mathbb{D}$ be the M\"obius map such that $h_n:=B_n\circ h_s\circ A_n\circ A_n'$ fixes $(x,y,z)$. Then (\ref{eq:not_qs}) is equivalent to 
\begin{equation}
\label{eq:diverg_cross}
\mathrm{cr}(h_n(a_n'),h_n(b_n'),h_n(c_n'),h_n(d_n'))\to\infty
\end{equation}
as $n\to\infty$.

The shear function $s:\mathcal{F}\to\mathbb{R}$ pulls back to $s_n:=s\circ A_{n}\circ A_n':\mathcal{F}_{*}\to\mathbb{R}$ and it is the shear function of $h_n$. Since $e^{s_n(f_*)}$ is bounded between $1/M$ and $M$ for each edge $f_*\in\mathcal{F}_*$, it follows that we can choose a subsequence $s_{n_k}$ that converges on each $f_*\in\mathcal{F}_*$ to a shear function $s_*:\mathcal{F}_*\to\mathbb{R}$. Since $s_{n_k}:\mathcal{F}_*\to\mathbb{R}$ satisfies (\ref{eq:qs_horocycle}) with the same constant $M$, it follows that $s_*$ also satisfies (\ref{eq:qs_horocycle}) with the same constant $M$. By Lemma \ref{lem:homeo}, there is a homeomorphism $h_*:S^1\to S^1$ that realizes $s_*$.

Note that $h_{n_k}$ converges to $h_*$ pointwise  on the set $X_*$ of vertices of $\mathcal{F}_*$ by the definition of the developing maps for $s_{n_k}$ and $s_*$, and the convergence  $s_{n_k}(f_*)\to s_*(f_*)$ for each $f_*\in\mathcal{F}_*$. Since $X_*$ is dense in $S^1$ and $h_{n_k}, h_*$ are order preserving it follows that $h_{n_k}$ converges to $h_*$ pointwise on $S^1$.
Then
$$
\mathrm{cr}(h_{n_k}(a_{n_k}'),h_{n_k}(b_{n_k}'),h_{n_k}(c_{n_k}'),h_{n_k}(d_{n_k}'))\to \mathrm{cr}(h_{*}(1),h_{*}(i),h_{*}(-1),h_{*}(-i))<\infty
$$ 
as $k\to\infty$, and
we obtain a contradiction with (\ref{eq:diverg_cross}). Therefore the first case cannot occur.

\subsubsection{The triangulations degenerate}

Assume now that we are in the second case and seek a contradiction with (\ref{eq:not_qs}). A finite set $X\subset S^1$ is called an {\it $\epsilon$-net} in $S^1$ if every point of $S^1$ is on at most $\epsilon$ distance from a point in $X$.
In order to facilitate the proof, we need the following lemma. 

\begin{lemma}
\label{lem:limits}
Let $g_n,h_n:S^1\to S^1$ be two sequences of orientation preserving homeomorphisms such that $g_n$ converges uniformly to a homeomorphism $g_{\infty}:S^1\to S^1$. Assume that there exists a $\frac{1}{n}$-net $X_n$ in $S^1$ such that $h_n|_{X_n}=g_n|_{X_n}$. Then for all $x\in S^1$
$$
\lim_{n\to\infty} h_n(x)=g_{\infty}(x).
$$
\end{lemma}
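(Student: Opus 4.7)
The plan is to sandwich $h_n(x)$ between evaluations of $g_n$ at net points on either side of $x$, and then use orientation preservation together with the uniform convergence $g_n\to g_\infty$ to pinch it to $g_\infty(x)$.

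First I would fix $x\in S^1$ and locate, for each sufficiently large $n$, two net points $a_n,b_n\in X_n$ with $a_n$ on the clockwise side of $x$ and $b_n$ on the counterclockwise side of $x$, both within distance $O(1/n)$ of $x$. The net property alone only yields a nearby point, not one on a prescribed side; to force a point on each side, I would apply the $\frac{1}{n}$-net condition to the two auxiliary points $x^{+}$ and $x^{-}$ placed at distance $2/n$ counterclockwise, resp.\ clockwise, from $x$. A $1/n$-close net point to $x^{+}$ cannot jump past $x$ (that would require traveling a distance $>2/n$), so it must sit on the counterclockwise side of $x$ at distance between $1/n$ and $3/n$; similarly on the other side. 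If $x\in X_n$ infinitely often there is nothing to prove, so I may assume $a_n\neq x\neq b_n$.

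Second, I would fix any reference point $c\in S^1$ with $c\neq x$ and pick $c_n\in X_n$ with $d(c_n,c)\leq 1/n$, so that $c_n\to c$. For $n$ large the four points $(a_n,x,b_n,c_n)$ are in counterclockwise order on $S^1$. Applying the orientation preserving homeomorphism $h_n$ and using the hypothesis $h_n|_{X_n}=g_n|_{X_n}$, I obtain that
\[
\bigl(g_n(a_n),\; h_n(x),\; g_n(b_n),\; g_n(c_n)\bigr)
\]
is in counterclockwise order on $S^1$. In particular $h_n(x)$ lies on the (unique) arc from $g_n(a_n)$ to $g_n(b_n)$ that does \emph{not} contain $g_n(c_n)$.

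Third, by the uniform convergence $g_n\to g_\infty$ and the continuity of the homeomorphism $g_\infty$, both $g_n(a_n)\to g_\infty(x)$ and $g_n(b_n)\to g_\infty(x)$ while $g_n(c_n)\to g_\infty(c)\neq g_\infty(x)$. Thus of the two arcs joining $g_n(a_n)$ to $g_n(b_n)$, one collapses to $\{g_\infty(x)\}$ and the other grows to all of $S^1\setminus\{g_\infty(x)\}$. Since $g_n(c_n)$ is eventually bounded away from $g_\infty(x)$, it lies on the growing arc; hence $h_n(x)$ lies on the collapsing arc, and $h_n(x)\to g_\infty(x)$.

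The main obstacle is the sidedness issue in the first step: the $\frac{1}{n}$-net property by itself gives only one nearby point and says nothing about its position relative to $x$. Resolving this requires the auxiliary points $x^{\pm}$ and the small-arc comparison described above. A secondary technical point is identifying which of the two arcs between $g_n(a_n)$ and $g_n(b_n)$ houses $h_n(x)$; the anchor point $c_n$ near $c\neq x$ is precisely what pins this down.
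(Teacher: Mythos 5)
Your proposal is correct and follows essentially the same route as the paper: sandwich $h_n(x)$ in the arc bounded by images of nearby net points, which by $h_n|_{X_n}=g_n|_{X_n}$, the uniform convergence $g_n\to g_\infty$, and the continuity of $g_\infty$ collapses to $g_\infty(x)$. Your two extra devices (the auxiliary points $x^{\pm}$ to force net points on both sides of $x$, and the anchor $c_n$ to pin down which of the two arcs between $g_n(a_n)$ and $g_n(b_n)$ contains $h_n(x)$) are just more careful versions of steps the paper treats implicitly via adjacent net points and the estimate $|h_n(x)-g_n(x)|<|a_n'-b_n'|<3\epsilon$.
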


\begin{proof}
Fix $\epsilon >0$ and $x\in S^1$. Let $a_n,b_n\in X_n$ be adjacent points such that $x\in (a_n,b_n)$, where $(a_n,b_n)$ is an arc of $S^1$ between $a_n$ and $b_n$ of length at most $\frac{1}{n}$. By the uniform continuity of $g_{\infty}$, there exists $n_0>0$ such that for $n\geq n_0$
\begin{equation}
\label{eq:ginf}
|g_{\infty}(a_n)-g_{\infty}(b_n)|<\epsilon .
\end{equation}
Let $a_n'=g_n(a_n)=h_n(a_n)$ and $b_n'=g_n(b_n)=h_n(b_n)$.  
Since $g_n$ converges to $g_{\infty}$ uniformly on $S^1$, we have that for $n\geq n_1>0$
$$
|g_{\infty}(a_n)-a_n'|<\epsilon\ \ \ \ \ \ \mathrm{and}\ \ \ \ \ \ |g_{\infty}(b_n)-b_n'|<\epsilon .
$$
These inequalities and (\ref{eq:ginf}) in turn imply that for $n\geq \max\{n_0,n_1\}$
\begin{equation}
\label{eq:gntog}
|a_n'-b_n'|<3\epsilon .
\end{equation}

Since $h_n(a_n)=a_n'$ and $h_n(b_n)=b_n'$ we get that both $h_n(x)$ and $g_n(x)$ belong to the arc $(a_n',b_n')=h_n((a_n,b_n))$. Then (\ref{eq:gntog}) gives for $n\geq \max\{n_0,n_1\}$
\begin{equation}
\label{eq:hngn}
|h_n(x)-g_n(x)|<|a_n'-b_n'|<3\epsilon .
\end{equation}

Finally, (\ref{eq:hngn}) implies for $n\geq \max\{n_0,n_1\}$
\begin{equation}
\label{eq:hng}
|h_n(x)-g_{\infty}(x)|\leq |h_n(x)-g_n(x)|+|g_n(x)-g_{\infty}(x)|<4\epsilon .
\end{equation}
Thus $\lim_{n\to\infty}h_n(x)=g_{\infty}(x)$ for all $x\in S^1$.
\end{proof}

Recall that $A_n:\mathbb{D}\to\mathbb{D}$ is the M\"obius map such that $A_n:1,i,-1,-i\mapsto a_n,b_n,c_n,d_n$ and $\mathcal{F}_n=A_n^{-1}(\mathcal{F})$. The complementary triangle $\Delta_n^0$ of $\mathcal{F}_n$ which contains $0$ has vertices $x_n,y_n,z_n$
such that $x_n,y_n\to x$ and $z_n\to z$ as $n\to\infty$, where $x\neq z$.  

Let $s_{z_n}:\mathcal{F}_n\to\mathbb{R}$ be the shear function that is equal to $s\circ A_n$ on the edges of $\mathcal{F}_n=A_n^{-1}(\mathcal{F})$ with one endpoint $z_n$ and is zero on all other edges. Let $h_{z_n}$ be the developing map for the shear function $s_{z_n}$ that is the identity on the triangle $\Delta_n^0$. Then there exists a M\"obius map $B_n:\mathbb{D}\to\mathbb{D}$ such that $h_{z_n}|_{X_n}=B_n\circ h_s\circ A_n|_{X_n}$. 

Fix $w\in S^1$ which is different from $x,z,x_n,y_n,z_n$. Then there exists a unique M\"obius map $B_n':\mathbb{D}\to\mathbb{D}$ such that $B_n'\circ h_{z_n}$ fixes $x_n$, $z_n$ and $w$. Define $g_n:= B_n'\circ h_{z_n}$ and $h_n:=B_n'\circ B_n\circ h_s\circ A_n$. By Lemma \ref{lem:one_fan_qs} the sequence $g_n$ is $M$-quasisymmetric for all $n$. Since the mutual distance between $x_n$, $z_n$ and $w$ has a positive lower bound, the sequence $g_n$ is normal.  Then there exists a subsequence of $g_n$ that converges to a quasisymmetric map $g_{\infty}$. For simplicity denote this subsequence by $g_n$ again. By the construction $h_n|_{X_n}=g_n|_{X_n}$. Then Lemma \ref{lem:limits} implies that $\lim_{n\to\infty}h_n(t)=g_{\infty}(t)$ for all $t\in S^1$. 
This is in a contradiction with (\ref{eq:not_qs}) as in the previous case and finishes the proof of Theorem 1.1.

\section{Correction to the proof in \cite{Saric1}}

 Lemma \ref{lem:one_fan_qs} is the main new ingredient in the proof of Theorem \ref{thm:main}. In \cite{Saric1}, we misinterpreted a lemma of Markovic \cite{Markovic} to claim that a control on the image under a homeomorphism $h$ of $S^1$ of four points on $S^1$ implies that the absolute value of the Beltrami coefficient of the Douady-Earle extension of $h$ is bounded away from $1$ near $0\in\mathbb{D}$. Fan and Hu \cite{FanHu} pointed out that  it is necessary to control eight points on $S^1$. They \cite{FanHu} proceeded with the exact steps of \cite{Saric1} to claim to have completed the proof. However, they did not prove a control on eight points of $S^1$(which is a major step when the triangulations degenerate) and they only tersely refer to \cite[Lemma 5.1]{Saric1} for the control. In \cite[Lemma 5.1]{Saric1}, we established the control on only four points. While it is possible to extend this lemma it is not a short computation. In this paper, we introduced Lemma \ref{lem:one_fan_qs} as a more geometric way of establishing this control and the control is on the whole $S^1$. Additional advantage of this approach is that the proof can be recast without the Douady-Earle extension \cite{DouadyEarle} which is given above.

For the completeness, we finish the proof in \cite{Saric1} using Lemma \ref{lem:one_fan_qs} and the Douady-Earle extension without refereeing to the lemma of Markovic. 
We replace $\mathbb{D}$ with the upper half-plane model $\mathbb{H}$ and the ideal boundary $S^1$ with the extended real axis $\bar{\mathbb{R}}=\mathbb{R}\cup\{\infty\}$. The Farey tesselation $\mathcal{F}$ of $\mathbb{H}$ is obtained by taking the base triangle $\Delta_0$ to have vertices $0$, $1$ and $\infty$, and other triangles to be the images of $\Delta_0$ under the group generated by the reflections in the sides of $\Delta_0$. 

In \cite{Saric1}, we assume that $s:\mathcal{F}\to\mathbb{R}$ satisfies (\ref{eq:qs_shears}) but the developing homeomorphism $h_s$ is not quasisymmetric. 
Since $h_s$ is not quasisymmetric it follows that the Doaudy-Earle extension $F_s$ of $h_s$ is not a quasiconformal map, i.e.-there exists a sequence $z_n\in\mathbb{H}$ such that $|Belt(F_s)(z_n)|\to 1$ as $n\to\infty$ where $Belt(F_s)(z):=\bar{\partial} F_s(z)/\partial F_s(z)$. The sequence $z_n$ leaves every compact subset of $\mathbb{H}$ because $F_s$ is real-analytic. Let $A_n\in PSL_2(\mathbb{Z})$ and $B_n \in PSL_2(\mathbb{R})$ such that $B_n\circ F_s\circ A_n$ fixes $0$, $1$ and $\infty$, and $z_n':=A_n^{-1}(z_n)$ is in the complementary triangle $\Delta_0$ of $\mathcal{F}$ with vertices $0$, $1$ and $\infty$. Since $A_n(\mathcal{F})=\mathcal{F}$, the shear function $s_n:=s\circ A_n:\mathcal{F}\to\mathbb{R}$ is well-defined. In addition, the map $B_n\circ h_s\circ A_n$ is a developing map of $s_n$. Note that the conformal naturality of the barycentric extension implies that 
$F_{s_n}=B_n\circ F_s\circ A_n$ and 
\begin{equation}
\label{eq:belt_1}|Belt(F_{s_n})(z_n')|\to 1
\end{equation} as $n\to\infty$. 

We seek a contradiction with (\ref{eq:belt_1}) and divide the argument into two cases based on the position of the sequence $z_n'\in\Delta_0$ (see \cite{Saric1}). The first case when a subsequence of $z_n'$ stays in a compact subset of $\Delta_0$ is unchanged.

The second case is where a correction is made. We assume that $z_n'$ leaves every compact subset of $\Delta_0$. After taking a subsequence and normalizing by a precomposition with an element of $PSL_2(\mathbb{Z})$ and a postcomposition by an element of $PSL_2(\mathbb{R})$, we can assume that $z_n'\to\infty$ inside $\Delta_0$ and $h$ fixes $0$, $1$ and $\infty$. The outline of the proof in \cite{Saric1} is as follows. Let $\lambda_n=Im(z_n')$ and $\lambda_n'\in\mathbb{R}$ such that $h_n'(x):=\frac{1}{\lambda_n'}h_{s_n}(\lambda_nx)$ fixes $0$, $1$ and $\infty$. 
One would like to show that $h_n'$ converges pointwise to a homeomorphism and then to finish the proof by the fact that a pointwise convergence of homeomorphisms implies pointwise convergence of the Beltrami coefficients of the corresponding Douady-Earle extensions.

Recall that $s_n:\mathcal{F}\to\mathbb{R}$ satisfies the condition (\ref{eq:qs_shears}) with a single $M$. We define $s_n^{\infty}(f)=s_n(f)$ for the edges $f\in\mathcal{F}$ with one endpoint at $\infty$ and $s_n^{\infty}(f)=0$ otherwise. Then Lemma \ref{lem:one_fan_qs} implies that $h_{s_n^{\infty}}$ is $M'$-quasisymmetric for each $n$. We define $f_n(x):=\frac{1}{\lambda_n'}h_{s_n^{\infty}}(\lambda_nx)$. Then $f_n$ are also $M'$-quasisymmetric for all $n$ and recall that $f_n$ fixes $0$, $1$ and $\infty$. The family $f_n$ is normal and thus there exists a subsequence $f_{n_k}$ that converges to an $M'$-quasisymmetric map $f_{\infty}$ of $\bar{\mathbb{R}}$. 

We will prove that $h_{n_k}'(x)=\frac{1}{\lambda_{n_k}'}h_{s_{n_k}}(\lambda_{n_k}x)$ converges uniformly on compact subsets $I\subset\mathbb{R}$ to $f_{\infty}$ which finishes the proof. Note that $\lambda_{n_k}\to\infty$ as $k\to\infty$. By definition  $h_{n_k}'$ and $f_{n_k}$ agree on the points $\frac{1}{\lambda_{n_k}}\mathbb{Z}$ of the real axis $\mathbb{R}$. We fix a compact interval $I\subset \mathbb{R}$ and prove uniform convergence on $I$.  

Let $x\in I$. Then there exist adjacent points $y_{n_k}^1,y_{n_k}^2\in\frac{1}{\lambda_{n_k}}\mathbb{Z}$ such that $x\in [y_{n_k}^1,y_{n_k}^2]$. Since both $f_{n_k}$ and $h_{n_k}'$ are increasing on $\mathbb{R}$ and agree at $\frac{1}{\lambda_{n_k}}\mathbb{Z}$ we have
$h_{n_k}'(x)\in [f_{n_k}(y_{n_k}^1),f_{n_k}(y_{n_k}^2)]$. Moreover, $f_{\infty}(x)\in [f_{\infty}(y_{n_k}^1), f_{\infty}(y_{n_k}^2)]$, the uniform convergence of $f_{n_k}$ to $f_{\infty}$ and $\lambda_{n_k}\to\infty$  imply that $h_{n_k}(x)$ is uniformly close to $f_{\infty}(x)$ for all $x\in I$ and $k$ large enough(for more details see the proof of Theorem \ref{thm:main}). Therefore we obtained the convergence of $h_{n_k}'$ to $f_{\infty}$ uniformly on $I$ and the proof is completed.

\end{document}